\title[Peripheral eigenvectors]{Peripherally automorphic unital completely positive maps}
\numberwithin{equation}{section}
\newtheorem{theorem}{\bf Theorem}[section]
\newtheorem{cor}[theorem]{\bf Corollary}
\newtheorem{remark}[theorem]{\bf Remark}
\newtheorem{prop}[theorem]{\bf Proposition}
\newtheorem{defin}[theorem]{\bf Definition}
\newtheorem{example}[theorem]{\bf Example}
\newcommand{\seq}{\subseteq}
\newcommand{\N}{\mathbb{N}}
\newcommand{\M}{\mathbb{M}}
\newcommand{\T}{\mathbb{T}}
\newcommand{\Pt}{\mathcal{P}}
\newcommand{\ol}{\overline}
\newcommand{\mcal}{\mathcal}
\newcommand{\CS}{$C^*$-algebra}
\newcommand{\tr}{\text{tr}}
\makeatletter \@namedef{subjclassname@2020}{\textup{2020}
    Mathematics Subject Classification} \makeatother
\begin{document}

\author{B. V. Rajarama Bhat}
\address{Indian Statistical Institute, Stat Math Unit, R V College Post, Bengaluru, 560059, India.}
\email{bvrajaramabhat@gmail.com, bhat@isibang.ac.in}

\author{Samir Kar}
\address{Indian Statistical Institute, Stat Math Unit, R V College Post, Bengaluru, 560059, India.}
\email{msamirkar@gmail.com}

\author{Bharat Talwar}
\address{ Department of Mathematics, Nazarbayev University, Astana 010000, Kazakhstan.}
\email{btalwar.math@gmail.com, bharat.talwar@nu.edu.kz}

\keywords{Multiplicative domain, peripheral spectrum, unital completely positive maps.}

\subjclass[2020]{37A55, 46L40, 47A10, 47L40}

\maketitle

\begin{abstract}

We identify and characterize unital completely positive (UCP)  maps
on finite dimensional $C^*$-algebras for which the Choi-Effros
product extended to the space generated by peripheral eigenvectors
matches with the original product. We analyze a decomposition of
general UCP maps in finite dimensions into  persistent and transient
parts. It is shown that UCP maps on finite dimensional
$C^*$-algebras with spectrum contained in the unit circle are
$*$-automorphisms.
\end{abstract}

\section{Introduction}

It is well-known that the collection of fixed points of normal
unital completely positive (UCP) maps has a von Neumann algebra
structure on introducing a new product called the Choi-Effros
product. This von Neumann algebra is known as the Poisson boundary
of the UCP map. In \cite{BhatTalwarKar} it is noted that this
product can be extended to peripheral eigenvectors, that is to
eigenvectors with eigenvalue on the unit circle. As a consequence,
the norm closed linear span of peripheral eigenvectors becomes a
$C^*$-algebra. This was termed as peripheral Poisson boundary of the
UCP map. It is a natural question as to when does the new product of
the peripheral Poisson boundary agrees with the original product we
had in the beginning. In this article we answer this question for
UCP maps on matrix algebras.

There is extensive literature on the peripheral spectrum of positive
maps \cite{FallahRansford, GrohJoT, GrohLAA, GrohMZ, EvansKrohn}. A
lot of it is motivated by the classical Perron-Frobenius theory.
However we limit ourselves  to results directly relevant to the
problem posed above and  try to present these results in a unified
way. In the process, we also found some statements in the literature
which are inaccurate -see \Cref{error}.

We concentrate on UCP maps and their spectrum. Instead of
considering general finite dimensional $C^*$-algebera we are
focusing  on UCP maps on the full matrix algebra $\M_d(\mathbb{C}) :
= \M_d$. There is not much loss of generality in doing so, in view
of \Cref{ExtendToFinitedimensioanlC*-algebra}.

In rest of this section, we set up some basic notation and recall
some results from \cite{BhatTalwarKar}. Let $\M_d$ be the \CS \ of
all  $d \times d$ complex matrices and let $\tau : \M_d \to \M_d$ be
a UCP map. A non-zero vector $X$ in $\M _d$ satisfying $\tau
(X)=\lambda X$ for some $\lambda \in \mathbb{T}:=\{z\in
\mathbb{C}:|z|=1\}$ is called a peripheral eigenvector of $\tau .$
We have, $$E_{\lambda }(\tau ):=\{ X\in \M _d: \tau(X)=\lambda
X\}.$$ The peripheral space of $\tau $ is defined as
$$\Pt(\tau) := \overline{\text{span}}\{ X \in \M_d : \tau(X) = \lambda X \text{ for some } \lambda \text{ with }
|\lambda| = 1 \}.$$ Observe that $\Pt(\tau)$ is a vector subspace of
$\M _d$, containing the identity and it is closed under taking
adjoints. In other words, it is an operator system. In general
$\Pt(\tau)$ may not be closed under matrix multiplication. However
one may define a new product by setting (See \cite{BhatTalwarKar})
$$X\circ Y = s-\lim_{n\to \infty}(\lambda \mu )^{-n}\tau ^n(XY)$$ for peripheral
eigenvectors $X,Y$ with $\lambda$ and $\mu$ as their respective
eigenvalues. Without changing  the definitions of norm and adjoint,
$(\Pt(\tau), \circ)$ defines a \CS. Such a theory is possible for
normal UCP maps on general  von-Neumann algebras. Since our focus in
this paper is on the finite dimensional \CS \ $\M_d$, we may replace the norm closure `$\overline{\text{span}}$' by `$\text{span}$' and  the strong
operator limit above can be replaced with the norm limit. The
peripheral Poisson boundary of $\tau $ is defined as the
$C^*$-algebra $(\Pt(\tau) , \circ ).$ Given a UCP map one would like
to identify its peripheral Poisson boundary. The best case scenario
would be when the new product coincides with the original product of
$\Pt(\tau).$ Keeping this in mind we make the following definition.

\begin{defin}\label{PeripherallyAutomorphicDefinition}
Let $\tau :\M _d \to \M _d$ be a UCP map. Then $\tau $ is said to be
{\em peripherally automorphic\/} if $X\circ Y =XY$ for every $X, Y$ in $\Pt(\tau) .$
\end{defin}
If $X\circ Y=XY$ in $\Pt(\tau)$, we see that it is a $C^*$-algebra
under the original product and  as per  \cite[Theorem
2.12]{BhatTalwarKar}, restriction of $\tau$ to $\Pt(\tau)$ is a
$C\sp\ast$-automorphism. This justifies this definition. One of the
main goals of this article is to identify the collection of
peripherally automorphic UCP maps. A natural candidate is the
following well-studied class of maps.
\begin{defin}
Let $\tau : \M _d\to \M _d$ be a UCP map. Then $\tau $ is said to be
a {\em stationary} UCP map if it admits a faithful invariant state,
that is, there exists a faithful state $\phi $ on $\M _d$ such that
$$\phi (\tau (X))=\phi (X), ~~\forall X\in \M _d.$$
\end{defin}

We go through some of the existing literature on this class of UCP
maps.  All these maps are peripherally automorphic. Then we look at
the notion of multiplicative domains for UCP maps and finally
characterize the collection of peripherally automorphic UCP maps. In
the last section, we study the discrete dynamics of a UCP map on $\M
_d$ and its decomposition into persistent and transient parts. We
are able to demonstrate this decomposition for arbitrary UCP maps on
$\M _d.$ We use the familiar tool of Jordan decomposition. The main
disadvantage of considering the Jordan decomposition of UCP maps is
that it does not respect the crucial positivity/algebraic structures
of $\M _d.$ Here we are able to overcome this obstacle  to some
extent by appealing to results from the peripheral Poisson boundary
theory. For instance it shown that  UCP maps on finite dimensional
$C^*$-algebras with spectrum contained in the unit circle are
$*$-automorphisms. This partially settles a conjecture of F.
Fidaleo, F. Ottomano\ and\ S. Rossi \cite{Francesco2} in the
affirmative.

\section{Stationary maps and peripherally automorphic maps}

It is well-known that every completely positive map $\tau: \M_d \to
\M_d$ is of the form $\tau(X) = \sum_{i=1}^r L_i^\ast X L_i$ for
some  $d \times d$ matrices $L_i$ which are known as Kraus (or
Choi-Kraus) coefficients  of $\tau .$ As trace of a positive matrix
is zero if and only if the matrix is zero, for the normalized trace
$\tr$ on $\M_d$, the formula $\langle X,Y \rangle := \tr(X^\ast Y)$
defines an inner product for every $X,Y \in \M_d$ (Our inner
products are anti-linear in the first variable). Then the adjoint
$\tau^\ast$ of $\tau$ is the unique linear operator satisfying
$\langle \tau(X),Y \rangle = \langle X,\tau^\ast(Y) \rangle$ for
every $X,Y \in \M_d$.
For a completely positive map $\tau(X) = \sum_{i=1}^kL_i^\ast X L_i$
it can be  verified that $\tau^\ast(X) = \sum_{i=1}^r L_i X
L_i^\ast$.

\begin{remark}\label{adjointmap}
 A UCP map  $\tau :\M _d\to \M _d$ is stationary if and only if the adjoint $\tau^\ast$ has a strictly positive fixed point. A UCP
 map $\tau: \M _d\to \M _d$ is trace preserving if and only if $\tau^\ast$ is unital.
\end{remark}

The importance of mixed unitary channels is well understood.
They are all trace preserving. Going further there is the notion
called trace reducing maps in literature -see \cite{NAJ}. A linear
map $\tau: \M_d \to \M_d$ is called {\em trace reducing} if
$\tr(\tau(X)) \leq \tr(X)$ for every positive $X$ in $ \M_d$. In a
similar fashion one can call $\tau$ a {\em state reducing map}  if
there exists a state $\psi$ on $\M_d$ such that $\psi(\tau(X)) \leq
\psi(X)$ for every $0 \leq X \in \M_d$. The following observation
tells us that for unital linear maps these notions do not enlarge
the classes of maps under consideration. For this particular result
the state $\psi $ need not be faithful.

\begin{prop}\label{ReducingPreservingSame}
Let $\tau : \M_d \to \M_d$ be a unital linear map. Then $\tau$ is state reducing with respect to a  state $\psi $  if and only if $\tau$ is $\psi $ preserving.
\end{prop}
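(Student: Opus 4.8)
The plan is to dispose of the trivial implication first and then exploit unitality together with the order structure of $\M_d$. If $\tau$ is $\psi$-preserving, then $\psi(\tau(X)) = \psi(X) \le \psi(X)$ for every positive $X$, so $\tau$ is state reducing with respect to $\psi$; note that no positivity of $\tau$ is needed for this direction.

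For the converse, suppose $\psi(\tau(X)) \le \psi(X)$ for every $X$ with $0 \le X \in \M_d$. The key observation is that the order interval $[0,I]$ is symmetric under the map $X \mapsto I - X$: if $0 \le X \le I$, then $I - X \ge 0$ as well. Applying the hypothesis to both $X$ and $I - X$, and using $\tau(I) = I$ together with $\psi(I) = 1$, I would obtain $\psi(\tau(X)) \le \psi(X)$ and also $1 - \psi(\tau(X)) = \psi(\tau(I-X)) \le \psi(I-X) = 1 - \psi(X)$, the latter rearranging to $\psi(\tau(X)) \ge \psi(X)$. Hence $\psi(\tau(X)) = \psi(X)$ for every $X$ in $[0,I]$.

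Finally I would bootstrap from the order interval $[0,I]$ to all of $\M_d$ by homogeneity and linearity: for a nonzero positive $X$, the element $X / \|X\|$ lies in $[0,I]$, so $\psi(\tau(X/\|X\|)) = \psi(X/\|X\|)$, which gives $\psi(\tau(X)) = \psi(X)$ after multiplying by $\|X\|$; the case $X = 0$ is immediate. Since every element of $\M_d$ is a complex linear combination of positive matrices, the linearity of $\psi$ and $\tau$ then yields $\psi(\tau(X)) = \psi(X)$ for all $X \in \M_d$, i.e.\ $\tau$ is $\psi$-preserving. There is no genuine obstacle here; the only points worth flagging are that faithfulness of $\psi$ is nowhere used — consistent with the remark preceding the statement — and that the argument relies only on $\tau(I) = I$ and linearity of $\tau$, not on any positivity assumption.
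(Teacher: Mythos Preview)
Your proof is correct and follows essentially the same route as the paper: both exploit the symmetry $X\mapsto I-X$ on the order interval $[0,I]$ together with $\tau(I)=I$ and $\psi(I)=1$ to upgrade the inequality to equality. You spell out the scaling and linear-combination steps to pass from $[0,I]$ to all of $\M_d$, which the paper leaves implicit, but the argument is otherwise identical.
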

\begin{proof}

Suppose $\tau $ is state reducing for $\psi .$  Consider any $X$ in
$\M_d$ with $0\leq X\leq I$. Note that we also have $0\leq I-X$. So we
get $\psi (\tau (X))\leq \psi (X)$ and $\psi (\tau (I-X))\leq \psi
(I-X)$. Since $\tau , \psi $ are unital, the second inequality means $-\psi (\tau (X))\leq -\psi (X).$ Hence $\psi (\tau(X))= \psi
(X).$ The converse statement is a triviality.
\end{proof}

\begin{remark}\label{error}
If a UCP map preserves a faithful state then it has a range of
implications.
In \cite[Theorem 5]{Shemesh} several conditions were
claimed to be  equivalent to this property.  In our notation, this
theorem essentially reads as follows:  Let $\tau(X)=\sum_{i=1}^r
L_i^*XL_i$ be a UCP map on $\M_d$. Then the following are
equivalent.
\begin{enumerate}
\item $\tau$ is stationary.
\item The spaces $\Pt(\tau)$ and $E_1(\tau)$ corresponding to $\tau$ are $*$-algebras.
\item For $X\in \M_d$ and $\lambda \in \mathbb{T}$, $\tau (X)= \lambda X$ if
and only if $XL_i=\lambda L_iX$ for every $i.$
\item The algebra  generated by the Kraus coefficients $\{L_i: 1 \leq i \leq r\}$ is a $\ast$-algebra.
\end{enumerate}
Below  in \Cref{ShemeshWasWrong} we provide a UCP map $\tau$ such
that the spaces $\Pt(\tau)$ and $E_1(\tau)$ are $*$-algebras but
neither is $\tau$ a stationary map nor does the algebra generated by
Kraus coefficients form a $*$-algebra. In other words, (2) does not
imply (1) or (4).
\Cref{EquivalentConditionsForExistenceOfSpecialFaithfulState} shows
(1) is equivalent to (4) and with the help of  \Cref{Ourclass} and
\Cref{MainTheorem} we will see that (2) is equivalent to (3).
\end{remark}

\begin{example}\label{ShemeshWasWrong}
Consider the UCP map $\tau :\M _2\to \M _2$ defined by
    $$\tau \left(\begin{bmatrix}
        x_{11} & x_{12}\\
        x_{21} & x_{22}
    \end{bmatrix}\right)=\begin{bmatrix}
        \frac{x_{11}+ x_{22}}{2} &0\\
        0 &x_{22}
    \end{bmatrix},~~\forall \begin{bmatrix}
        x_{11} &x_{12}\\
        x_{21} &x_{22}
    \end{bmatrix} \in \M _2.$$  Here we may take the Kraus coefficients $$L_1:=\begin{bmatrix}
        \frac{1}{\sqrt 2} &0\\
        0 &0
    \end{bmatrix}, ~~L_2:=\begin{bmatrix}
        0 &0\\
        \frac{1}{\sqrt 2} &0
    \end{bmatrix}, ~~L_3:=\begin{bmatrix}
        0 &0\\
        0 &1
    \end{bmatrix}.$$
In this case $\Pt(\tau)$ and $E_1(\tau)$ are the $\ast$-algebra
$\{\lambda I:\lambda\in \mathbb{C}\}$. The adjoint map $\tau ^*$ is
given by
 $$\tau^*\left(\begin{bmatrix}
    x_{11} & x_{12}\\
    x_{21} & x_{22}
\end{bmatrix}\right)=\begin{bmatrix}
    \frac{x_{11}}{2} &0\\
    0 &x_{22}+\frac{x_{11}}{2}
\end{bmatrix}.$$
Clearly $\tau^*$ has no strictly positive fixed point. Hence from
\Cref{adjointmap}, $\tau $ is not stationary.
Also note that the algebra generated by the Kraus coefficients is the
algebra of all $2\times 2$  lower triangular matrices which is not a
$*$-algebra.
\end{example}

Now we have a theorem characterizing UCP maps preserving a faithful state, i.e, the stationary maps.
Some of these results are
well-known and are here just for completeness. Recall from
\cite{EvansKrohn} that a UCP map $\tau$ on a finite dimensional \CS
\ $A$ is said to be irreducible if there does not exist any
projection $P \in A$, different from $0,I$  such that $\tau (P)\leq P.$
Observe that (3) implies (1) of the following  theorem  shows that  every irreducible UCP map has a faithful invariant state.

\begin{theorem}\label{EquivalentConditionsForExistenceOfSpecialFaithfulState}
Let $\tau$ be a UCP map on $\M_d$ given by $\tau(X)=\sum_{i=1}^{r}
L_i^*XL_i$. Then the following are equivalent.
\begin{enumerate}
\item $\tau$ is stationary, that is, $\tau $ admits a faithful
invariant state.
\item  $\tau(P) \leq P$ for a projection $P$ implies that $\tau(P) =
P$.
\item There exists a set of mutually orthogonal projections  $\{ P_1, P_2, \ldots , P_l \}$ (for some $l$) in $\M_d$,
with $P_1+P_2+\cdots +P_l=I$,
 such that  $\tau_{j} := \tau_{|_{P_j \M_d P_j}}$ is an irreducible UCP map for every $1 \leq j \leq l$.
\item  The algebra  generated by Kraus coefficients $\{L_i\}_{1 \leq i \leq r}$ is a $\ast$-algebra.
 \end{enumerate}
\end{theorem}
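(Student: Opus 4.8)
The plan is to establish the cycle $(1)\Rightarrow(2)\Rightarrow(3)\Rightarrow(1)$ and then close the loop with $(1)\Leftrightarrow(4)$. For $(1)\Rightarrow(2)$, suppose $\phi$ is a faithful invariant state and $\tau(P)\le P$ for a projection $P$. Apply $\phi$ to get $\phi(\tau(P))=\phi(P)$ by invariance, so $\phi(P-\tau(P))=0$; since $P-\tau(P)\ge 0$ and $\phi$ is faithful, $\tau(P)=P$. For $(2)\Rightarrow(3)$: if $\tau$ is already irreducible we take $l=1$, $P_1=I$. Otherwise pick a minimal projection $P$ with $\tau(P)\le P$; by (2) it is fixed, $\tau(P)=P$. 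One then checks that $P\M_d P$ is invariant under $\tau$ (using $\tau(P)=P$ together with positivity, $\tau(PXP)\le \|X\|\tau(P)=\|X\|P$ for $X\ge 0$, so the range sits inside $P\M_d P$), that $\tau|_{P\M_d P}$ is UCP with unit $P$, and that minimality of $P$ forces this compression to be irreducible. Iterate on the complementary corner $(I-P)\M_d(I-P)$, which is also $\tau$-invariant once $\tau(P)=P$; finite dimension guarantees termination after finitely many steps, yielding the orthogonal family $\{P_1,\dots,P_l\}$.

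For $(3)\Rightarrow(1)$ I would use the stated consequence that irreducibility gives a faithful invariant state: each irreducible block $\tau_j$ on $P_j\M_d P_j$ admits a faithful invariant state $\phi_j$ (this is the Perron--Frobenius-type fact alluded to just before the theorem; if not yet available it must be proved separately, and I flag this as the one place needing genuine input — see below). Set $\phi := \tfrac{1}{l}\sum_j \phi_j(P_j\,\cdot\,P_j)$. Faithfulness is clear since the $P_j$ sum to $I$ and each $\phi_j$ is faithful on its corner. For invariance, $\phi(\tau(X)) = \tfrac{1}{l}\sum_j \phi_j(P_j\tau(X)P_j)$, and here one needs $P_j\tau(X)P_j = \tau_j(P_jXP_j)$; this is \emph{not} automatic for general UCP maps and is exactly what $\tau(P_j)=P_j$ for all $j$ buys us, via the Schwarz/multiplicative-domain phenomenon (each $P_j$ lies in the multiplicative domain of $\tau$ because it is a projection with $\tau(P_j)$ a projection and $\tau(P_j^2)=\tau(P_j)$). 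This is the step I expect to be the main obstacle: relating the compressed maps $\tau_j$ back to $\tau$ cleanly enough that invariance of $\sum \phi_j$ transfers, and making sure the decomposition in (3) really does make each $P_j$ central-like enough for this. The multiplicative-domain machinery developed later in the paper is the natural tool.

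Finally, for $(1)\Leftrightarrow(4)$: given (1) with faithful invariant $\phi$, \Cref{adjointmap} says $\tau^*$ has a strictly positive fixed point $\rho$, i.e. $\sum_i L_i \rho L_i^* = \rho$ with $\rho>0$. Conjugating by $\rho^{1/2}$ turns the $L_i$ into operators $\widetilde L_i := \rho^{-1/2}L_i\rho^{1/2}$ satisfying $\sum_i \widetilde L_i \widetilde L_i^* = I$, so the algebra they generate is a finite-dimensional operator algebra containing a two-sided approximate (hence genuine) unit and closed under a conjugation argument; a standard fact (the generated algebra of a set of matrices whose ``row'' is a coisometry, after the correction map, is self-adjoint) then shows the algebra generated by $\{L_i\}$ is a $*$-algebra. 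Conversely, if the algebra $\mathcal A$ generated by $\{L_i\}$ is a $*$-algebra, it is a finite-dimensional $C^*$-algebra, hence has a faithful tracial-type state $\omega$; one builds a faithful $\tau$-invariant state from $\omega$ using that $\tau(X)=\sum L_i^*XL_i$ and $\mathcal A^* = \mathcal A$, checking invariance on a spanning set. I would organize the self-adjointness direction around the observation that $\sum_i \widetilde L_i\widetilde L_i^* = I$ forces, for any word $w$ in the $\widetilde L_i$, its adjoint $w^*$ to be expressible via the algebra using the relation repeatedly; the finite-dimensionality keeps everything honest. Throughout, the substantive analytic content is concentrated in $(3)\Rightarrow(1)$ and the forward direction of $(4)\Rightarrow(1)$; the rest is bookkeeping with positivity and faithfulness.
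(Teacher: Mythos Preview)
Your argument for $(1)\Rightarrow(2)\Rightarrow(3)\Rightarrow(1)$ is essentially the paper's, including the use of the multiplicative-domain fact that $\tau(P_j)=P_j$ forces $P_j\tau(X)P_j=\tau(P_jXP_j)$; the paper makes exactly this move (and even uses the unnormalized sum $\sum_j\phi_j(P_j\,\cdot\,P_j)$ rather than your average, which is harmless). Where you diverge is in linking condition~(4) to the rest.

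The paper does \emph{not} prove $(1)\Rightarrow(4)$ by the $\rho^{1/2}$-conjugation you sketch. Instead it proves $(3)\Rightarrow(4)$ directly: having already established (inside that implication) that $\tau$ is stationary and that each $P_j$ commutes with every $L_i$, the algebra $\mathbb{A}$ generated by the $L_i$ decomposes as $\mathbb A=\sum_j P_j\mathbb AP_j$, and one then applies Burnside's classical theorem to show $P_j\mathbb AP_j = B(\operatorname{range} P_j)$, which is manifestly self-adjoint. Irreducibility of $P_j\mathbb AP_j$ is obtained by showing that any $L_i$-invariant subspace $S\subseteq\operatorname{range} P_j$ satisfies $P_S\le\tau(P_S)$, hence (by stationarity and (2)) $\tau(P_S)=P_S$, whence $S$ is also $L_i^*$-invariant and $P_S$ commutes with all $L_i$; irreducibility of $\tau_j$ then forces $S$ to be trivial. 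This is a clean, fully explicit argument.

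Your proposed route for $(1)\Rightarrow(4)$ has a genuine gap. From $\sum_i\widetilde L_i\widetilde L_i^{\,*}=I$ alone it is not at all clear that ``using the relation repeatedly'' expresses $w^*$ as a polynomial in the $\widetilde L_i$ for an arbitrary word $w$: that single identity contributes one adjoint-containing relation to the algebra, and there is no evident recursive mechanism that peels adjoints off a general word. (For a single generator it happens to work because a finite-dimensional coisometry is unitary and $\widetilde L_1^{\,*}$ is then a polynomial in $\widetilde L_1$, but this does not extend to $r\ge 2$ without further input.) To close it you would effectively need an invariant-subspace argument of the Burnside type anyway. For $(4)\Rightarrow(1)$ both you and the paper defer to the literature (the paper cites \cite{Shemesh}); your description of how to manufacture a faithful $\tau$-invariant state from a faithful state on the $*$-algebra $\mathcal A$ is not yet an argument either.
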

\begin{proof}
(1) implies (2): For a stationary map $\tau$, we fix $\phi$ as a
faithful state on $\M_d$ satisfying $\phi(\tau(X)) = \phi(X)$ for
every $X \geq 0$.
Let $P$ be a projection such that $\tau (P)\leq
P$. On applying $\phi $, we get $\phi (P-\tau (P))=\phi (P)-\phi
(P)=0$. As $\phi $ is faithful, $P=\tau(P)$ follows.

(2) implies (3): By our hypothesis,
\begin{eqnarray*}
 \mcal{P} &:= &\{ P \in \M_d : P \text{ is a projection and } \tau(P)
= P\}\\& =&  \{ P \in \M_d : P \text{ is a projection and } \tau(P)
\leq P\}.\end{eqnarray*} Now if $\mcal{P}$ is trivial, $\tau $ is
irreducible and we are done. If not, using finite dimensionality of
$\M _d$, we can get a non-trivial minimal projection $P_1$ such that
$\tau (P_1)=P_1.$ Clearly, $\tau$  maps ${P_1 \M_d P_1}$ to ${P_1
\M_d P_1}$  and $\tau_{|_{P_1 \M_d P_1}}$ is an irreducible UCP map.
The unitality of $\tau $, yields $\tau (P_1^\perp) = P_1^\perp:= I -
P_1 $ and we can repeat the process on ${P_1^\perp  \M_d P_1^\perp
}$. Continuing this we can exhaust whole of $\M _d.$

(3) implies (4): Since $\tau_j$ is an irreducible UCP map on the \CS \
$P_j \M_d P_j$, there exists state $\phi_j$ on $P_j \M_d P_j$ such
that $\tau_j$ preserves $\phi_j$. Define a faithful linear
functional on $\M_d$ by $$\phi(X) :=  \sum_{j=1}^{l} \phi_j(P_j X
P_j).$$ Then $\phi$ is a state because $\phi(I) = \sum_{j=1}^{l}
\phi_j(P_j I P_j) = \sum_{j=1}^{l} \phi_j(P_j) = \sum_{j=1}^{l} P_j
= I$. It is well-known, and can be easily verified using the two definitions of multiplicative domain given later in this paper, that $\tau$ has block decomposition of the form
$\tau(P_j X P_j) =  \tau(P_j) \tau(X) \tau(P_j) = P_j \tau(X) P_j$ for every $1 \leq j \leq l$ and $X \in \M_d$. Hence \[ \phi(\tau(X)) =  \sum_{j=1}^{l} \phi_j(P_j \tau(X) P_j) =  \sum_{j=1}^{l} \phi_j(\tau(P_j X P_j)) = \sum_{j=1}^{l} \phi_j(P_j X P_j) = \phi(X).\]
Thus, $\tau $ is a stationary UCP map and hence we can
apply the trick used later in proving (4) implies (5) of \Cref{Ourclass}
along with the fact that $\tau(P_j) = P_j$ to obtain $P_jL_i = L_i
P_j$. If $\mathbb{A}$ denotes the algebra generated by $\{ L_i \}_{1
\leq i \leq r}$, then $\mathbb{A}P_j=P_j\mathbb{A}=P_j\mathbb{A}P_j$
and hence one may write $\mathbb{A} = \sum_{j=1}^l P_j \mathbb{A} P_j$. It now suffices to prove that $\mathbb{A}P_j$ is a $\ast$-algebra.

For every fixed $1 \leq j \leq l$, we claim that $\mathbb{A}P_j =
B(\text{range}(P_j))$, the set of all  linear operators on
$\text{range}(P_j)$. To prove the claim we use the Burnside's
classical theorem \cite{Burnside}. In order to apply the theorem, we
need to show that the subalgebra $\mathbb{A}P_j$ is irreducible i.e.
there is no proper non-trivial subspace of $\text{range}(P_j)$ which
is left invariant by every element of $\mathbb{A}P_j$. Let $S
\seq \text{range}(P_j)$ be a subspace which is invariant under
$L_iP_j$ for every $1 \leq i \leq r$. Then $L_i S  = L_i P_j S \seq
S$ and hence $S$ is invariant under each $L_i$. If we prove that
$S$ is reducing for each $L_i$, then $P_S L_i = L_i P_S$ for every
$i$, $P_S$ being the projection onto $S$. This will imply that
$$\tau(P_S) = \sum_{i=1}^{r} L_i^*P_SL_i = \sum_{i=1}^{r}
L_i^*L_iP_S = P_S.$$ Since each $\tau_j$ is irreducible, we will
obtain that  either $S=\{0\}$ or $S= \text{range}(P_j)$ proving the
claim.

Finally, we show that $S$ is reducing for each $L_i$.
It is known from \cite[Theorem 1.1]{Kribs} that there exists a projecton $P$ such that $P\leq \tau(P)$ (respectively, $P\geq \tau(P)$) if and only if range$(P)$ is invariant under $L_i$ (respectively, $L_i^\ast$) for every $1 \leq i \leq r$.
As $S$ is invariant under each $L_i$, we have $P_S \leq \tau(P_S)$.
Since $\tau$ is stationary one may use $(2)$ to obtain that $P_S = \tau(P_S)$.
This implies that $S$ is invariant under $L_i^\ast$ for every $i$, proving that $S$ is reducing for each $L_i$.

(4) implies (1): See $(4) \implies (1)$ of \cite[Theorem 5]{Shemesh}.
\end{proof}
We will study peripheral eigenvectors for UCP maps with faithful
invariant states. It is convenient to consider a larger class described
below.
Recall from (\cite{Choi}, \cite[Remark 2.7]{Rahaman}) that the
multiplicative domain $\mathcal{M}_\tau$ of a UCP map $\tau$ is the $C\sp \ast$-subalgebra \[\{X\in \mathbb{M}_d:\tau(XY)=\tau(X)\tau(Y) \text{ and
}\tau(YX)=\tau(Y)\tau(X) \text{ for every } Y \in \M_d\}\] which is
same as \[\{X\in \mathbb{M}_d:\tau(XX^*)=\tau(X)\tau(X^*) \text{ and
} \tau(X^*X)=\tau(X^*)\tau(X)\}.\]
Some special cases of the following result are well-known (See \cite[Theorem 2.1]{Kribs} and \cite[Theorem
5]{Burgarth}) and the proof of (4) implies (5) uses a standard trick (See for instance \cite[Theorem 4.25]{Watrous}).

\begin{theorem}\label{Ourclass}
Let $\tau :\M _d\to \M _d$ be a UCP map with a Choi-Kraus
decomposition $\tau (X)= \sum _{i=1}^rL_i^*XL_i, ~~\forall X\in \M
_d.$
Then the following are equivalent.
\begin{enumerate}
\item $\Pt(\tau)\subseteq \mathcal{M}_\tau.$
\item  $\Pt(\tau) \subseteq \mathcal{M}_{\tau ^\infty}:=\bigcap
_{k=1}^{\infty} \mathcal{M}_{\tau ^k}$.
\item $\tau$ is peripherally automorphic.
\item For $\lambda \in \mathbb{T}$, $X \in E_\lambda(\tau)  ~~\mbox{implies}~~ \tau(X^\ast X) = X^\ast X.$
\item   For $\lambda \in \mathbb{T}$,  $Y\in E_\lambda(\tau)$ if and only if $YL_i=\lambda L_iY$ for every $1 \leq i \leq r$.
\end{enumerate}
\end{theorem}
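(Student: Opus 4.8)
The plan is to establish the cycle $(2)\Rightarrow(1)\Rightarrow(2)$, the chain $(1)\Leftrightarrow(4)\Leftrightarrow(5)$, and $(1)\Rightarrow(3)\Rightarrow(4)$, which together give all five equivalences. First I would dispose of the easy points. In $(5)$ only the forward implication has content, since $YL_i=\lambda L_iY$ for every $i$ forces $\tau(Y)=\sum_i L_i^*(YL_i)=\lambda\big(\sum_i L_i^*L_i\big)Y=\lambda Y$ because $\sum_i L_i^*L_i=\tau(I)=I$; likewise $(2)\Rightarrow(1)$ is immediate as $\mathcal{M}_{\tau^\infty}\subseteq\mathcal{M}_\tau$. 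Two elementary facts will recur: $\tau$ is positive hence $\ast$-preserving, so a peripheral eigenvector $X$ with $\tau(X)=\lambda X$, $|\lambda|=1$, has $X^*\in E_{\bar\lambda}(\tau)$; and $\mathcal{M}_\tau$, indeed each $\mathcal{M}_{\tau^k}$, is a $\ast$-subalgebra, in particular a linear subspace, so it contains the span of any family of its elements.

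For $(1)\Rightarrow(2)$ and $(1)\Rightarrow(4)$: if $X\in E_\lambda(\tau)\subseteq\Pt(\tau)\subseteq\mathcal{M}_\tau$, the second description of the multiplicative domain gives $\tau(X^*X)=\tau(X^*)\tau(X)=\bar\lambda\lambda\,X^*X=X^*X$, which is $(4)$, and symmetrically $\tau(XX^*)=XX^*$. Thus $X^*X$ and $XX^*$ are honest fixed points of $\tau$, so for every $k$ one has $\tau^k(X^*X)=X^*X=\tau^k(X^*)\tau^k(X)$ and the analogue with $X,X^*$ swapped, i.e. $X\in\mathcal{M}_{\tau^k}$; since $\mathcal{M}_{\tau^k}$ is a subspace containing every peripheral eigenvector it contains $\Pt(\tau)$, and intersecting over $k$ yields $(2)$. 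For $(4)\Rightarrow(1)$, given $(4)$ I apply it to both $X\in E_\lambda(\tau)$ and $X^*\in E_{\bar\lambda}(\tau)$ to get $\tau(X^*X)=X^*X=\tau(X^*)\tau(X)$ and $\tau(XX^*)=XX^*=\tau(X)\tau(X^*)$, so $X\in\mathcal{M}_\tau$, and then $\Pt(\tau)\subseteq\mathcal{M}_\tau$ by linearity. The step $(4)\Leftrightarrow(5)$ is the crux and rests on the standard trick: for $X\in E_\lambda(\tau)$ one checks, using $\tau(X)=\lambda X$, $\tau(X^*)=\bar\lambda X^*$ and $\sum_i L_i^*L_i=I$, the identity
\[ \sum_{i=1}^{r}\,(XL_i-\lambda L_iX)^*(XL_i-\lambda L_iX)\;=\;\tau(X^*X)-X^*X. \]
If $(4)$ holds the right-hand side vanishes, so each nonnegative summand vanishes and $XL_i=\lambda L_iX$ for all $i$; conversely $XL_i=\lambda L_iX$ gives $L_i^*X^*XL_i=\bar\lambda\lambda\,X^*L_i^*L_iX=X^*L_i^*L_iX$, and summing yields $\tau(X^*X)=X^*\big(\sum_iL_i^*L_i\big)X=X^*X$, which is $(4)$.

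It remains to fold in $(3)$. For $(1)\Rightarrow(3)$, take peripheral eigenvectors $X\in E_\lambda(\tau)$, $Y\in E_\mu(\tau)$; since $X\in\mathcal{M}_\tau$, the first description of the multiplicative domain gives $\tau(XY)=\tau(X)\tau(Y)=\lambda\mu\,XY$, so $XY\in E_{\lambda\mu}(\tau)$, hence $\tau^n(XY)=(\lambda\mu)^nXY$ and $X\circ Y=\lim_n(\lambda\mu)^{-n}\tau^n(XY)=XY$; as $\circ$ and the original product are both bilinear on $\Pt(\tau)$, which is the span of peripheral eigenvectors, they agree everywhere, i.e. $\tau$ is peripherally automorphic. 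For $(3)\Rightarrow(4)$, when $\tau$ is peripherally automorphic its restriction to $\Pt(\tau)$ is, by \cite[Theorem 2.12]{BhatTalwarKar}, a $\ast$-automorphism for the \emph{original} product, so $\tau(AB)=\tau(A)\tau(B)$ for all $A,B\in\Pt(\tau)$; with $A=X^*$, $B=X$ for $X\in E_\lambda(\tau)$ (and $X^*\in\Pt(\tau)$, an operator system) this gives $\tau(X^*X)=\tau(X^*)\tau(X)=X^*X$, i.e. $(4)$. One can also bypass Theorem 2.12: by Kadison--Schwarz $(\tau^n(X^*X))_n$ is an increasing, norm-bounded sequence of positive matrices converging to $X^*\circ X$, peripheral automorphy forces $X^*\circ X=X^*X$, and an increasing sequence whose first and limiting terms both equal $X^*X$ must be constant, so $\tau(X^*X)=X^*X$.

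The genuinely delicate point, I expect, is the interface between the Choi--Effros product $\circ$ and the ordinary matrix product used in $(1)\Rightarrow(3)$ and $(3)\Rightarrow(4)$: one must be careful that it suffices to test $X\circ Y=XY$ on peripheral eigenvectors (legitimate because $\circ$ extends bilinearly to $\Pt(\tau)$, a fact supplied by \cite{BhatTalwarKar}) and that $\ast$-automorphy of $(\Pt(\tau),\circ)$ really does convert into an honest multiplicativity statement for $\tau$ as a map on $\M_d$. By comparison, the multiplicative-domain manipulations and the identity underlying $(4)\Leftrightarrow(5)$ are routine bookkeeping of adjoints and eigenvalues.
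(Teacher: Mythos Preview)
Your proof is correct and follows essentially the same route as the paper: the same sum-of-squares identity underlies $(4)\Leftrightarrow(5)$, the same multiplicative-domain bookkeeping handles $(1),(2),(3)$, and the same appeal to \cite[Theorem 2.12]{BhatTalwarKar} closes $(3)\Rightarrow(4)$; the paper merely arranges these as the single cycle $(1)\Rightarrow(2)\Rightarrow(3)\Rightarrow(4)\Rightarrow(5)\Rightarrow(1)$ rather than your hub-and-spoke. Your alternative argument for $(3)\Rightarrow(4)$ via the monotone sequence $\tau^n(X^*X)\uparrow X^*\circ X=X^*X$ is a pleasant bonus not in the paper, since it sidesteps the citation of Theorem~2.12.
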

\begin{proof}
(1) implies (2): Consider $X\in \Pt(\tau)$ such that $\tau(X)=\lambda X$ with $\lambda\in \mathbb{T}$.
As $\Pt(\tau)\subseteq \mathcal{M}_\tau$, for every $k\in \N$ we have $\tau^k(XX^*)=\lambda^kX\bar{\lambda}^kX^*=\tau^k(X)\tau^k(X^*)$ and $\tau^k(X^*X)=\bar{\lambda}^kX^*\lambda^kX=\tau^k(X^*)\tau^k(X)$.
This shows that $\Pt(\tau) \seq \mathcal{M}_{\tau^k}$ for every $k \in \N$ and hence $\Pt(\tau) \subseteq \mathcal{M}_{\tau^\infty}$.

(2) implies (3) :
As $\Pt(\tau)$ is spanned by peripheral eigenvectors of $\tau$, it suffices to
prove that for $\tau(X) = \lambda X$ and $\tau(Y) = \mu Y$ with
$|\lambda| = |\mu|= 1$ we have $X \circ Y = XY$.
Using the hypothesis that $\Pt(\tau) \seq \mcal{M}_{\tau^\infty}$, we obtain $$X \circ Y = s-\lim (\lambda \mu)^{-n} \tau^n(X Y) = s-\lim (\lambda \mu)^{-n} \tau^n(X) \tau^n(Y) = XY.$$

(3) implies (4) :  Let $X \in E_\lambda(\tau)$ and $\lambda \in \T$.
From \cite[Theorem 2.12]{BhatTalwarKar} we have $\tau(X^\ast \circ X) = \tau(X^\ast) \circ \tau(X) = X^\ast \circ X$.
Using the hypothesis that $\tau$ is peripherally automorphic, one obtains $\tau(X^\ast X)  = X^\ast  X$.

(4) implies (5) : Suppose $\lambda \in \mathbb{T}$, $Y\in \M _d$ and
$\tau (Y)=\lambda Y$.
Then
\begin{eqnarray*}
    \sum_{i=1}^r \left( Y L_i - \lambda L_i Y\right)^\ast \left( Y L_i - \lambda L_i Y \right)
    &=& \sum_{i=1}^r \left( L_i^\ast Y^\ast Y L_i - \lambda L_i^\ast Y^\ast  L_i Y - \ol{\lambda} Y^\ast L_i^\ast Y L_i +  Y^\ast L_i^\ast  L_i Y \right)\\
    &=&  \tau (Y^\ast Y) - \lambda \tau(Y^\ast)  Y - \ol{\lambda} Y^\ast \tau(Y) + Y^\ast  Y \\
    &=& \tau(Y^\ast Y) - Y^\ast  Y -  Y^\ast Y +  Y^\ast  Y \\
    &=& \tau(Y^\ast Y) -  Y^\ast  Y\\
    &=& 0.
\end{eqnarray*}
Consequently, $YL_i=\lambda L_iY$ for all $i$.
Conversely, let $|\lambda| = 1$ and $YL_i=\lambda L_iY$ for every $1 \leq i \leq r$.
It follows from the Kraus decomposition of $\tau$ that $Y\in E_\lambda(\tau)$.

(5) implies (1) : To prove that $\Pt(\tau) \seq M_\tau$, it suffices to prove that for $X \in E_\lambda(\tau)$ with $|\lambda| = 1$ we have $\tau(X^\ast X) = \tau(X^\ast) \tau(X)$ and $\tau(X X^\ast) =  \tau(X) \tau(X^\ast)$.
As $X \in E_\lambda(\tau)$, this is same as proving that $\tau(X^\ast X) = X^\ast X$ and $\tau(X X^\ast) =  X X^\ast$.
We know from the hypothesis that $X L_i =  \lambda L_iX$ and $X^\ast L_i = \ol{\lambda} L_i X^\ast$ for every $1 \leq i \leq r$.
Hence, $$X^\ast X L_i = X^\ast \lambda L_i X = \lambda \ol{\lambda} L_i X^\ast X = L_i X^\ast X$$ and $$ X  X^\ast L_i = X \ol{\lambda} L_i X^\ast = \lambda \ol{\lambda} L_i  X X^\ast = L_i XX^\ast $$  establishing that $\tau(X^\ast X) = X^\ast X$  and $\tau(X X^\ast) =  X X^\ast$.
\end{proof}

\begin{cor}
Let $\tau$ be  peripherally automorphic, $X \in E_\lambda(\tau)$
with $\lambda \in \T$ and let $Y \in \M_d$ be such that $\tau(Y)=\mu
Y$. Then $XY\in E_{\lambda \mu}(\tau)$.
\end{cor}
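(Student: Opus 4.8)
The plan is to read off the statement directly from the equivalence of conditions (1) and (3) in \Cref{Ourclass}. Since $\tau$ is assumed peripherally automorphic, that theorem gives $\Pt(\tau)\subseteq \mathcal{M}_\tau$. Because $X\in E_\lambda(\tau)$ with $\lambda\in\T$, we have $X\in\Pt(\tau)\subseteq\mathcal{M}_\tau$, so $X$ lies in the multiplicative domain of $\tau$; by definition this means $\tau(XZ)=\tau(X)\tau(Z)$ for every $Z\in\M_d$.

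Applying this identity with $Z=Y$ and substituting $\tau(X)=\lambda X$, $\tau(Y)=\mu Y$ yields
\[
\tau(XY)=\tau(X)\tau(Y)=(\lambda X)(\mu Y)=\lambda\mu\, XY,
\]
which is exactly the assertion $XY\in E_{\lambda\mu}(\tau)$. Here $E_{\lambda\mu}(\tau)$ is to be read as the eigenspace $\{Z\in\M_d:\tau(Z)=\lambda\mu Z\}$, a meaningful object whether or not $\mu\in\T$ and whether or not $XY$ happens to vanish.

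I do not expect any real obstacle. The one bookkeeping point is that membership in $\mathcal{M}_\tau$ gives the multiplicativity $\tau(XZ)=\tau(X)\tau(Z)$ only once we know $X$ itself belongs to the multiplicative domain, which is precisely what peripheral automorphy supplies through \Cref{Ourclass}. If one prefers to avoid the multiplicative domain altogether, there is an equally short direct argument: $\tau$ is positive, hence $\ast$-preserving, so $X^\ast\in E_{\ol{\lambda}}(\tau)$, and condition (5) of \Cref{Ourclass} applied to $X^\ast$ gives $X^\ast L_i=\ol{\lambda}L_iX^\ast$, i.e. $L_i^\ast X=\lambda XL_i^\ast$ for each Kraus coefficient $L_i$; then $\tau(XY)=\sum_{i=1}^r L_i^\ast X Y L_i=\lambda X\sum_{i=1}^r L_i^\ast Y L_i=\lambda X\tau(Y)=\lambda\mu\, XY$.
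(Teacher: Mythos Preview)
Your proof is correct and matches the paper's own argument essentially verbatim: the paper simply notes that $X\in\Pt(\tau)\subseteq\mathcal{M}_\tau$ (using \Cref{Ourclass}) and concludes $\tau(XY)=\tau(X)\tau(Y)=\lambda\mu XY$. Your additional alternative via condition~(5) is also valid but not needed.
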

\begin{proof}
Since $X \in \Pt(\tau) \seq \mathcal{M}_\tau$, we have $\tau(XY)=\tau(X)\tau(Y)=\lambda \mu XY$.
\end{proof}

\begin{cor}\label{SIsInR}
Suppose  $\tau :\M _d \to \M _d$ is a stationary UCP map with faithful invariant state $\phi$.
Then $\tau$ is peripherally automorphic.
\end{cor}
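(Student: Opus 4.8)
The plan is to show that a stationary UCP map $\tau$ with faithful invariant state $\phi$ satisfies condition (4) of \Cref{Ourclass}, namely that $X \in E_\lambda(\tau)$ with $\lambda \in \T$ implies $\tau(X^\ast X) = X^\ast X$; peripheral automorphy then follows immediately from the equivalence of (4) and (3) in that theorem. So the entire task reduces to proving this single implication using the existence of $\phi$.

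First I would fix $X \in E_\lambda(\tau)$ with $|\lambda| = 1$ and apply the Kadison--Schwarz inequality for UCP maps, which gives $\tau(X^\ast X) \geq \tau(X^\ast)\tau(X) = \bar\lambda \lambda X^\ast X = X^\ast X$. Thus $\tau(X^\ast X) - X^\ast X \geq 0$ is a positive element of $\M_d$. Next I would apply the faithful invariant state $\phi$ to this positive element: by invariance, $\phi(\tau(X^\ast X)) = \phi(X^\ast X)$, so $\phi\bigl(\tau(X^\ast X) - X^\ast X\bigr) = 0$. Since $\phi$ is faithful and $\tau(X^\ast X) - X^\ast X$ is positive, this forces $\tau(X^\ast X) - X^\ast X = 0$, i.e. $\tau(X^\ast X) = X^\ast X$. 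This is exactly condition (4), so by \Cref{Ourclass} the map $\tau$ is peripherally automorphic.

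The argument is short and the main (mild) obstacle is simply invoking the Kadison--Schwarz inequality correctly for unital completely positive maps and recording that $\tau(X^\ast)\tau(X) = X^\ast X$ when $X$ is a peripheral eigenvector with unimodular eigenvalue; everything else is the same faithfulness trick already used in the proof of (1) $\Rightarrow$ (2) of \Cref{EquivalentConditionsForExistenceOfSpecialFaithfulState}. Alternatively, one could bypass \Cref{Ourclass} and note directly that stationarity implies condition (2) of \Cref{EquivalentConditionsForExistenceOfSpecialFaithfulState} and hence condition (4) there (the Kraus algebra is a $\ast$-algebra), from which $YL_i = \lambda L_i Y$ for peripheral eigenvectors can be extracted as in the proof of (4) $\Rightarrow$ (5) of \Cref{Ourclass}; but routing through condition (4) of \Cref{Ourclass} via Kadison--Schwarz is the most economical path.
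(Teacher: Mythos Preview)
Your proof is correct and follows essentially the same approach as the paper: use Kadison--Schwarz to get $\tau(X^\ast X)\geq X^\ast X$ for peripheral eigenvectors, then use the faithful invariant state to force equality, and invoke condition (4) of \Cref{Ourclass}. The paper's proof is exactly this argument, just stated more tersely.
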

\begin{proof}
Let $\tau$ be stationary and $X \in E_\lambda(\tau) \text{ with } \lambda \in \mathbb{T}$.
Then $\tau(X) = \lambda X$, $\tau(X^\ast) = \ol{\lambda} X$ and using  Kadison-Schwarz inequality we obtain $\tau(X^\ast X) \geq \tau(X^\ast) \tau(X) = X^\ast X$.
An application of the faithful state $\phi$ now shows that $\tau(X^\ast X)  = X^\ast X$.
\end{proof}

The following result is particularly useful when we consider convex
sets of peripherally automorphic maps, such as  the class of trace
preserving maps (which includes mixed unitary channels) or more
generally stationary UCP maps with respect to a fixed faithful
state.
\begin{cor}\label{convexity}
Suppose $\tau = \sum _{j=1}^kp_j\tau _j$ for  some UCP maps $\tau_j$,  $p_j>0$ for $1 \leq j \leq k$ and $\sum_{j=1}^k p_j = 1$.
If $\tau $ is peripherally automorphic, then the peripheral spectrum of $\tau $ is contained in the intersection of the peripheral spectrums of $\tau _j$'s and $\Pt(\tau) \seq \Pt(\tau_j)$ for every $1 \leq j \leq k$.
\end{cor}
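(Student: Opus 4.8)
The plan is to prove the single stronger statement that $E_\lambda(\tau) \subseteq E_\lambda(\tau_j)$ for every $\lambda \in \mathbb{T}$ and every $1 \le j \le k$; both assertions of the corollary then follow at once, since $\Pt(\cdot)$ is by definition the (closed) linear span of the peripheral eigenspaces, and a nonzero common eigenvector witnessing $\lambda \in E_\lambda(\tau)$ also witnesses $\lambda$ in the peripheral spectrum of each $\tau_j$.

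Fix $\lambda \in \mathbb{T}$ and $X \in E_\lambda(\tau)$, and set $Y_j := \tau_j(X)$, so that $\sum_{j=1}^k p_j Y_j = \tau(X) = \lambda X$. Since $\tau$ is peripherally automorphic, \Cref{Ourclass} gives $X \in \mathcal{M}_\tau$, whence $\tau(X^\ast X) = \tau(X^\ast)\tau(X) = \ol{\lambda}\lambda\, X^\ast X = X^\ast X$. Applying the Kadison--Schwarz inequality to each UCP map $\tau_j$ we get $\tau_j(X^\ast X) \ge Y_j^\ast Y_j$, and averaging with the weights $p_j$ yields
$$ X^\ast X = \tau(X^\ast X) = \sum_{j=1}^k p_j\, \tau_j(X^\ast X) \ \ge\ \sum_{j=1}^k p_j\, Y_j^\ast Y_j . $$
On the other hand $X^\ast X = (\lambda X)^\ast(\lambda X) = \big(\sum_j p_j Y_j\big)^\ast\big(\sum_j p_j Y_j\big)$, so combining the two displays,
$$ \sum_{j=1}^k p_j\, Y_j^\ast Y_j \ \le\ \Big(\sum_{j=1}^k p_j Y_j\Big)^\ast\Big(\sum_{j=1}^k p_j Y_j\Big) . $$

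The key point is the elementary identity
$$ \sum_{j=1}^k p_j\, Y_j^\ast Y_j - \Big(\sum_{j=1}^k p_j Y_j\Big)^\ast\Big(\sum_{j=1}^k p_j Y_j\Big) = \frac{1}{2}\sum_{i,j=1}^k p_i p_j\, (Y_i - Y_j)^\ast(Y_i - Y_j), $$
whose right-hand side is a sum of positive semidefinite terms, hence $\ge 0$. The inequality above therefore forces this expression to vanish, and since each $p_i p_j > 0$ we conclude $(Y_i - Y_j)^\ast(Y_i - Y_j) = 0$, i.e. $Y_i = Y_j$ for all $i,j$. Together with $\sum_j p_j Y_j = \lambda X$ and $\sum_j p_j = 1$, this gives $Y_j = \lambda X$, that is $\tau_j(X) = \lambda X$, for every $j$, which is exactly the claimed inclusion $E_\lambda(\tau) \subseteq E_\lambda(\tau_j)$.

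I do not expect a genuine obstacle here. The one step requiring care is the use of the hypothesis in the right form: peripheral automorphy is invoked only through $\Pt(\tau) \subseteq \mathcal{M}_\tau$ (equivalently $\tau(X^\ast X) = X^\ast X$ for peripheral eigenvectors $X$, \Cref{Ourclass}(4)), which is then coupled with the sum-of-squares identity above — the finite-dimensional equality-case form of the operator convexity of $Z \mapsto Z^\ast Z$. Everything else is routine bookkeeping with convex combinations.
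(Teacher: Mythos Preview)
Your argument is correct. The paper proves the same inclusion $E_\lambda(\tau)\subseteq E_\lambda(\tau_j)$ but via part (5) of \Cref{Ourclass}: if $\tau(X)=\lambda X$ then $XL=\lambda LX$ for every Kraus coefficient $L$ of $\tau$, and since the Kraus coefficients of each $\tau_j$ occur (up to scalars) among those of $\tau$, the same commutation gives $\tau_j(X)=\lambda X$. Your route instead invokes part (4) (equivalently (1)) of \Cref{Ourclass} to get $\tau(X^\ast X)=X^\ast X$, then couples Kadison--Schwarz for each $\tau_j$ with the variance identity $\sum_j p_j Y_j^\ast Y_j-\bigl(\sum_j p_j Y_j\bigr)^\ast\bigl(\sum_j p_j Y_j\bigr)=\tfrac12\sum_{i,j}p_ip_j(Y_i-Y_j)^\ast(Y_i-Y_j)$ to force $\tau_j(X)=\lambda X$. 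The paper's proof is a one-line consequence of the Kraus commutation relation; yours is Kraus-free and uses only the Schwarz inequality, so it would go through verbatim for convex combinations of unital Schwarz maps and is better suited to settings where a finite Choi--Kraus decomposition is not available.
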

\begin{proof}
We make use of part (5) of Theorem \ref{Ourclass}.
If $\lambda $ is in the peripheral spectrum of $\tau $, with eigenvector $X$, then $XL=\lambda LX$ for every Kraus coefficient $L$ of $\tau$.
The Kraus coefficients of $\tau _j$'s can also be considered as scalar multiples of Kraus coefficients of $\tau$.
Thus, $XL=\lambda LX$ for every Kraus coefficient $L$ of $\tau_j$.
Using this and the expression for $\tau_j$'s in terms of their Kraus coefficients, one obtains that  $\lambda $ is in the peripheral spectrum of every $\tau_j$ with $X$ as corresponding eigenvector.
\end{proof}
The preceding corollary and its proof tells us that $\tau$ exhibits an extreme behavior on $\Pt(\tau)$ in the sense that for such $\tau$ and $\tau_j$'s we have $\tau(X) = \tau_j(X)$ for every $X \in \Pt(\tau)$ and $1 \leq j \leq k$.
We point it out here that in general there exists stationary maps which may be written as non-trivial convex combination of other stationary maps.

Now we have the main theorem of this Section.
\begin{theorem}\label{MainTheorem}
Let $\tau : \M _d\to \M _d$ be a UCP map.
Then $\tau$ is peripherally automorphic if and only if $\Pt(\tau)$ is closed under matrix multiplication. In this case $X\mapsto \tau (X)$  is a $C^*$-algebra automorphism on $\Pt(\tau)$.
\end{theorem}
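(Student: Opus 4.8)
The plan is to prove the two implications of the equivalence separately, verifying condition (4) of \Cref{Ourclass} in the nontrivial direction, and then to read off the final assertion from \cite[Theorem 2.12]{BhatTalwarKar} exactly as recorded in the discussion following \Cref{PeripherallyAutomorphicDefinition}.

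The easy direction is immediate: if $\tau$ is peripherally automorphic then $X\circ Y=XY$ for all $X,Y\in\Pt(\tau)$, and since $(\Pt(\tau),\circ)$ is a $C^*$-algebra it is in particular closed under $\circ$, so $XY=X\circ Y\in\Pt(\tau)$; thus $\Pt(\tau)$ is closed under matrix multiplication. For the converse I would assume $\Pt(\tau)$ is closed under matrix multiplication and check condition (4) of \Cref{Ourclass}. Fix $\lambda\in\T$ and $X\in E_\lambda(\tau)$, and put $P:=X^\ast X$. Since $\tau$ is $\ast$-preserving, $X^\ast\in E_{\ol\lambda}(\tau)\subseteq\Pt(\tau)$, so by the multiplicative-closedness hypothesis $P\in\Pt(\tau)$. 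The Kadison--Schwarz inequality gives $\tau(P)=\tau(X^\ast X)\geq\tau(X)^\ast\tau(X)=X^\ast X=P$, and applying the positive map $\tau$ repeatedly yields the monotone chain $P\leq\tau(P)\leq\tau^2(P)\leq\cdots$; in particular $\tau^n(P)\geq\tau(P)$ for every $n\geq 1$. On the other hand, because $P\in\Pt(\tau)$ is a (finite) linear combination of peripheral eigenvectors, I can write $P=\sum_{j=1}^m Z_j$ with each $Z_j$ a nonzero eigenvector for some eigenvalue $\mu_j\in\T$, the $\mu_j$ distinct, whence $\tau^n(P)=\sum_{j=1}^m\mu_j^n Z_j$. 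A standard recurrence argument — the cyclic semigroup generated by $(\mu_1,\dots,\mu_m)$ in the compact group $\T^m$ accumulates at the identity — produces positive integers $n_k$ with $\mu_j^{n_k}\to 1$ for every $j$, so $\tau^{n_k}(P)\to\sum_j Z_j=P$. Since the cone of positive matrices is closed and $\tau^{n_k}(P)\geq\tau(P)$ for all $k$, passing to the limit gives $P\geq\tau(P)$; combined with $\tau(P)\geq P$ this forces $\tau(X^\ast X)=X^\ast X$, which is precisely condition (4) of \Cref{Ourclass}. Hence $\tau$ is peripherally automorphic.

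Finally, when $\tau$ is peripherally automorphic the $C^*$-algebra $(\Pt(\tau),\circ)$ coincides with $(\Pt(\tau),\cdot)$, and by \cite[Theorem 2.12]{BhatTalwarKar} the restriction of $\tau$ to $\Pt(\tau)$ is a $\ast$-automorphism of this $C^*$-algebra, as already observed after \Cref{PeripherallyAutomorphicDefinition}. The step I expect to be the main obstacle is the reverse inequality $\tau(X^\ast X)\leq X^\ast X$: neither the monotonicity coming from Kadison--Schwarz nor the almost-periodicity of the $\tau$-orbit of $X^\ast X$ suffices by itself, and the essential use of the hypothesis is precisely that multiplicative-closedness places $X^\ast X$ inside $\Pt(\tau)$, so that its orbit is almost periodic and can be played off against the monotone chain.
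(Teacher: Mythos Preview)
Your proof is correct and shares the same backbone as the paper's: both use the hypothesis to place $X^\ast X$ inside $\Pt(\tau)$, decompose it as a sum of peripheral eigenvectors, and invoke the recurrence of $(\mu_1,\dots,\mu_m)$ in the torus to produce a subsequence along which $\tau^{n_k}(X^\ast X)\to X^\ast X$. The difference is in how this subsequence is exploited. The paper observes, via \cite[Theorem~2.5 and Lemma~2.2]{BhatTalwarKar}, that the limit along this subsequence is exactly $X^\ast\circ X$; since $X^\ast\circ X$ is already known to be a fixed point of $\tau$ by \cite[Theorem~2.12]{BhatTalwarKar}, one reads off $\tau(X^\ast X)=\tau(X^\ast\circ X)=X^\ast\circ X=X^\ast X$ directly. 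You instead combine the recurrence with the monotone chain $P\le\tau(P)\le\tau^2(P)\le\cdots$ coming from Kadison--Schwarz, which sandwiches $\tau(P)$ between $P$ and $\lim_k\tau^{n_k}(P)=P$. Your route is slightly more self-contained in that it does not appeal to the Choi--Effros product machinery during the converse argument; the paper's route has the minor conceptual bonus of identifying $X^\ast X$ with $X^\ast\circ X$ along the way.
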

\begin{proof}
If $\tau$ is peripherally automorphic, then \cite[Theorem 2.3]{BhatTalwarKar} implies that $\Pt(\tau)$ is closed under multiplication.
Conversely, let $\Pt(\tau)$ be closed under matrix multiplication.
Using \cite[Theorem 2.12]{BhatTalwarKar} we obtain that
$\tau(X^\ast \circ X) = \tau(X^\ast) \circ \tau(X) = X^\ast \circ X$ for $\lambda \in \mathbb{T}$ and  $X \in E_\lambda(\tau)$.
In view of \Cref{Ourclass}, it suffices to prove that $X^\ast \circ X = X^\ast X$.
As $X^\ast X \in \Pt(\tau)$, one may write $X^\ast X = \sum X_i$ where $X_i \in E_{\lambda_i}(\tau)$ with each $|\lambda_i| = 1$. From \cite[Theorem 2.5 \& Lemma 2.2]{BhatTalwarKar} we obtain a sequence $k_n$ of natural numbers such that $$X^\ast \circ X = s-\lim \tau^{k_n}(X^\ast X) = s-\lim \sum \lambda_i^{k_n} X_i = \sum X_i = X^\ast X.$$
That $\tau_{|_{\Pt(\tau)}}$ is a $C^*$-automorphism is now a consequence of \cite[Theorem 2.12]{BhatTalwarKar}.
\end{proof}

It is not difficult to see that direct sum of two peripherally automorphic maps is peripherally automorphic.
Corollary \ref{convexity} suggests that perhaps one should study
convex combinations of peripherally automorphic maps. However, below
we show that convex combinations and compositions of two
peripherally automorphic maps may not be peripherally automorphic,
unlike stationary maps with respect to a fixed state.

\begin{example}\label{AverageIsNotPeripherallyAutomorphic}\label{CompositionofPAIsNotPA}
    \begin{enumerate}
        \item For two peripherally automorphic maps $\tau_1$ and $\tau_2$ on $\mathbb{M}_3$ given by $$\tau_1\left( \begin{bmatrix}
            x_{11} &x_{12} &x_{13}\\
            x_{21} &x_{22} &x_{23}\\
            x_{31} &x_{32} &x_{33}
        \end{bmatrix} \right) = \begin{bmatrix}
            x_{11} &0 &0\\
            0 &x_{22} &0\\
            0 &0 &x_{11}
        \end{bmatrix} \text{ and } \tau_2\left(\begin{bmatrix}
            x_{11} &x_{12} &x_{13}\\
            x_{21} &x_{22} &x_{23}\\
            x_{31} &x_{32} &x_{33}
        \end{bmatrix} \right)= \begin{bmatrix}
            x_{11} &0 &0\\
            0 &x_{22} &0\\
            0 &0 &x_{22}
        \end{bmatrix}.$$
Then for $\tau= \frac{1}{2}(\tau_1+\tau_2)$ we have
$$\tau\left(\begin{bmatrix}
    1 & 0 & 0\\
    0& 3 & 0\\
    0 & 0 & 2
\end{bmatrix}\right)= \begin{bmatrix}
    1 & 0 & 0\\
    0& 3 & 0\\
    0 & 0 & 2
\end{bmatrix}$$ but $$\tau\left(\begin{bmatrix}
    1 & 0 & 0\\
    0& 3 & 0\\
    0 & 0 & 2
\end{bmatrix}^2\right) = \tau\left(\begin{bmatrix}
    1 & 0 & 0\\
    0& 9 & 0\\
    0 & 0 & 4
\end{bmatrix}\right) = \begin{bmatrix}
    1 & 0 & 0\\
    0& 9 & 0\\
    0 & 0 & 5
\end{bmatrix} \neq \begin{bmatrix}
    0 & 0 & 0\\
    0& 9 & 0\\
    0 & 0 & 4
\end{bmatrix}.$$
Thus, $\tau$ is not peripherally automorphic.

\item Let $$\tau_1\left(\begin{bmatrix}
            x_{11} &x_{12} &x_{13}\\
            x_{21} &x_{22} &x_{23}\\
            x_{31} &x_{32} &x_{33}
        \end{bmatrix}\right) = \begin{bmatrix}
            x_{11} &0 &0\\
            0 &x_{22} &0\\
            0 &0 &\frac{x_{11}+x_{33}}{2}
        \end{bmatrix}$$  and  $$\tau_2\left(\begin{bmatrix}
            x_{11} &x_{12} &x_{13}\\
            x_{21} &x_{22} &x_{23}\\
            x_{31} &x_{32} &x_{33}
        \end{bmatrix}\right) =  \begin{bmatrix}
            x_{11} &0 &0\\
            0 &x_{22} &0\\
            0 &0 &\frac{x_{22}+x_{33}}{2}
        \end{bmatrix}.$$
Here, $$\Pt(\tau_1)=\left\{ \begin{bmatrix}
            a &0 &0\\
            0 &b &0\\
            0 &0 &a
        \end{bmatrix}:a,b\in \mathbb{C}\right\} \text{ and }
        \Pt(\tau_2)=\left\{ \begin{bmatrix}
            b &0 &0\\
            0 &a &0\\
            0 &0 &a
        \end{bmatrix}:a,b\in
        \mathbb{C}\right\}.$$
It follows from \Cref{MainTheorem} that $\tau_1$ and $\tau_2$ are peripherally automorphic maps.
It is verified that their composition $$\tau := (\tau_2\circ\tau_1) \left(\begin{bmatrix}
    x_{11} &x_{12} &x_{13}\\
    x_{21} &x_{22} &x_{23}\\
    x_{31} &x_{32} &x_{33}
\end{bmatrix}\right) = \begin{bmatrix}
    x_{11} &0 &0\\
    0 &x_{22} &0\\
    0 &0 &\frac{x_{11}+x_{33}+ 2 x_{22}}{4}
\end{bmatrix}$$
is not peripherally automorphic since
$$\tau\left(\begin{bmatrix}
    3 & 0 & 0\\
    0& 0 & 0\\
    0 & 0 & 1
\end{bmatrix}\right)= \begin{bmatrix}
    3 & 0 & 0\\
    0& 0 & 0\\
    0 & 0 & 1
\end{bmatrix}$$ but $$\tau\left(\begin{bmatrix}
    3 & 0 & 0\\
    0& 0 & 0\\
    0 & 0 & 1
\end{bmatrix}^2\right) = \tau\left(\begin{bmatrix}
    9 & 0 & 0\\
    0& 0 & 0\\
    0 & 0 & 1
\end{bmatrix}\right) = \begin{bmatrix}
    9 & 0 & 0\\
    0& 0 & 0\\
    0 & 0 & \frac{5}{2}
\end{bmatrix} \neq \begin{bmatrix}
    9 & 0 & 0\\
    0& 0 & 0\\
    0 & 0 & 1
\end{bmatrix}.$$
\end{enumerate}
\end{example}

\section{ A decomposition theorem for  UCP maps}

Consider the discrete dynamics $\{ \tau ^n: n\geq 0\}$ of a UCP map on $\M _d$. It is clear that peripheral eigenvectors have  periodic
or persistent behavior and other eigenvectors decay to zero under
this dynamics. So we may expect that $\M _d$ has some kind of direct
sum decomposition consisting of a persistent part and a transitive
part.
One may also expect that on the persistent part, $\tau$ acts as an automorphism.
However, it is to be kept in mind that in general UCP maps are
not diagonalizable and so eigenvectors may not describe the full
picture. There is extensive literature on this subject as this has a
lot of physical significance. Most papers obtain such a
decomposition under some assumption or the other on UCP map (See
\cite[Theorem 2.5]{Rahaman},\cite[Theorem 9]{CSU}, \cite{FV}) such as irreducibility, existence of faithful invariant
state etc. In this Section we have such a result in full generality.
It is applicable to all UCP maps.

 \begin{theorem}\label{orthogonal}
 Let $\tau$ be a UCP map on $\mathbb{M}_d$. Then the domain $\M _d$ has a
 unique vector space direct sum decomposition:
 $$\M _d=\Pt(\tau ) \oplus \mcal{N}(\tau ),$$
where $\Pt(\tau )$ is the peripheral space of $\tau $  and  $\mcal{N}(\tau)=\{X\in \M_d:\lim_{n\to \infty}\tau ^n(X)=0\}$.
Furthermore, $\Pt(\tau ^m)=\Pt(\tau )$ and $\mcal{N}(\tau ^m)= \mcal{N}(\tau )$ for
every $m \geq 1.$
 \end{theorem}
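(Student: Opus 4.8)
The plan is to use the Jordan decomposition of $\tau$ viewed as a linear operator on the finite-dimensional vector space $\M_d$, together with the theory of the peripheral Poisson boundary already developed in the excerpt. Since $\tau$ is power-bounded (because it is unital and positive, hence $\|\tau^n\|=\|\tau^n(I)\|=1$ for all $n$), every eigenvalue of $\tau$ lies in the closed unit disk, and for eigenvalues on the unit circle the corresponding Jordan blocks must be trivial (a nontrivial Jordan block for a modulus-one eigenvalue would force $\|\tau^n\|\to\infty$). Consequently $\M_d$ decomposes as a direct sum of the generalized eigenspaces: let $\Pt'$ be the sum of the eigenspaces $E_\lambda(\tau)$ for $|\lambda|=1$, and let $\mcal{N}'$ be the sum of the generalized eigenspaces for $|\lambda|<1$. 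Clearly $\Pt'=\Pt(\tau)$ by definition, and on $\mcal{N}'$ the spectral radius of $\tau$ is strictly less than $1$, so $\tau^n\to 0$ on $\mcal{N}'$, giving $\mcal{N}'\seq\mcal{N}(\tau)$.

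The first step, then, is to establish $\M_d=\Pt(\tau)\oplus\mcal{N}(\tau)$. The inclusion $\Pt(\tau)\oplus\mcal{N}'\seq\M_d$ with $\mcal{N}'\seq\mcal{N}(\tau)$ being a direct sum decomposition of the whole space already shows every $X$ can be written as $X=X_p+X_t$ with $X_p\in\Pt(\tau)$, $X_t\in\mcal{N}(\tau)$. For uniqueness it suffices to show $\Pt(\tau)\cap\mcal{N}(\tau)=\{0\}$: if $X\in\Pt(\tau)$ then $X=\sum X_i$ with $X_i\in E_{\lambda_i}(\tau)$, $|\lambda_i|=1$, and by the argument already used in the proof of \Cref{MainTheorem} (invoking \cite[Theorem 2.5 \& Lemma 2.2]{BhatTalwarKar}) there is a sequence $k_n$ with $\lambda_i^{k_n}\to 1$ for all $i$ simultaneously, so $\tau^{k_n}(X)=\sum\lambda_i^{k_n}X_i\to\sum X_i=X$; if also $X\in\mcal{N}(\tau)$ then $\tau^{k_n}(X)\to 0$, forcing $X=0$. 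This simultaneously shows $\mcal{N}(\tau)=\mcal{N}'$ exactly (anything in $\mcal{N}(\tau)$ has its $\Pt(\tau)$-component in $\Pt(\tau)\cap\mcal{N}(\tau)=\{0\}$), which will be convenient for the second part.

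For the second statement, the plan is to prove $\Pt(\tau^m)=\Pt(\tau)$ and $\mcal{N}(\tau^m)=\mcal{N}(\tau)$ for all $m\geq 1$. The inclusion $\Pt(\tau)\seq\Pt(\tau^m)$ is immediate since $\tau(X)=\lambda X$ implies $\tau^m(X)=\lambda^m X$ with $|\lambda^m|=1$. For the reverse inclusion, suppose $\tau^m(X)=\mu X$ with $|\mu|=1$; pick $\nu\in\T$ with $\nu^m=\mu$ and set $Y=\frac1m\sum_{j=0}^{m-1}\nu^{-j}\tau^j(X)$, so that $\tau(Y)=\nu Y$, which shows $Y\in\Pt(\tau)$; since $E_\mu(\tau^m)$ is spanned by such $Y$'s as $\nu$ ranges over the $m$-th roots of $\mu$ (one recovers $X=\sum Y$ over a suitable choice), one gets $\Pt(\tau^m)\seq\Pt(\tau)$. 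The equality $\mcal{N}(\tau^m)=\mcal{N}(\tau)$ follows at once from the definition: $\tau^n(X)\to 0$ if and only if $\tau^{mn}(X)\to 0$, using again that $\tau$ is power-bounded so that $\lim_n\tau^n(X)=0$ is equivalent to $\lim_n\tau^{mn}(X)=0$ (the intermediate terms $\tau^{mn+j}(X)=\tau^j(\tau^{mn}(X))$ are controlled by $\|\tau^j\|=1$).

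The main obstacle I anticipate is the careful justification that the Jordan blocks attached to peripheral eigenvalues are trivial and, relatedly, the existence of the common subsequence $k_n$ with $\lambda_i^{k_n}\to 1$; both are standard (the first from power-boundedness, the second from simultaneous Diophantine approximation / pigeonhole on the torus, which is precisely \cite[Theorem 2.5 \& Lemma 2.2]{BhatTalwarKar}), so the real work is bookkeeping rather than a genuinely hard estimate. One must also be slightly careful that the decomposition is a \emph{vector space} direct sum only — it need not be orthogonal nor respect the $C^*$-structure — and that $\Pt(\tau)$ is genuinely the span (not merely the generalized eigenspace) of peripheral eigenvectors, which is exactly what the triviality of the peripheral Jordan blocks delivers.
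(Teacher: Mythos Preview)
Your proposal is correct and follows essentially the same route as the paper: Jordan decomposition of $\tau$ on $\M_d$, triviality of the peripheral Jordan blocks (you argue this directly from power-boundedness, the paper cites \cite{MW,NAJ}), and then identification of the two summands with $\Pt(\tau)$ and $\mcal{N}(\tau)$. The only cosmetic differences are that for $\Pt(\tau)\cap\mcal{N}(\tau)=\{0\}$ the paper invokes ``$\tau$ is isometric on $\Pt(\tau)$'' rather than your subsequence $k_n$, and for the ``Furthermore'' part the paper simply declares it clear from the Jordan form whereas you spell out the root-of-unity averaging argument.
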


\begin{proof}

From the  Jordan decomposition of $\tau$ we have
$$\mathbb{M}_d=\ker(\tau-\lambda_1\mathcal{I})^{\nu(\lambda_1)}\oplus \ker(\tau-\lambda_2\mathcal{I})^{\nu(\lambda_2)}\oplus \cdots \oplus \ker(\tau-\lambda_k\mathcal{I})^{\nu(\lambda_k)}$$
where $\mathcal{I}$ is the identity map on $\mathbb{M}_d$,
$\{\lambda_1,\lambda_2,\ldots, \lambda_k\}$ is the set of
eigenvalues of $\tau$ and $\nu(\lambda_i)$ is the algebraic
multiplicity of the eigenvalue $\lambda_i$. It is known that all the
Jordan blocks for peripheral eigenvalues of a UCP map are one
dimensional (see \cite{MW}, \cite{NAJ}). Hence the algebraic
multiplicity $\nu(\lambda)$ of the eigenvalue $\lambda \in \T$ is
same as its geometric multiplicity. Therefore  the persistent part
$\oplus_{\lambda\in \sigma(\tau):|\lambda|=1}\ker(\tau-\lambda
\mathcal{I})^{\nu(\lambda)}$ is same as $\Pt(\tau)$.
 For $X\in
\ker(\tau-\lambda_i\mathcal{I})^{\nu(\lambda_i)} $ with
$|\lambda_i|<1$, we have $\tau^n(X)\to 0$. Therefore taking $\mcal{N}(\tau) :=
\oplus_{\lambda\in \sigma(\tau):|\lambda|<1}\ker(\tau-\lambda
\mathcal{I})^{\nu(\lambda)}$, we have the required decomposition.

Suppose $Z\in \M _d$, satisfies $\lim _{n\to \infty }\tau ^n(Z)=0$,
and $Z=X+Y$ is the decomposition of $Z$, with $X\in \Pt(\tau )$, $Y\in
\mcal{N}(\tau )$. As $\lim _{n\to \infty }\tau ^n(Y)=0$, we get $\lim
_{n\to \infty}\tau ^n(X)=0$. But this is not possible unless $X=0$,
as $\tau $ is isometric on $\Pt(\tau )$. This proves the uniqueness of
the decomposition. The second part is clear from the Jordan
decomposition of $\tau .$
\end{proof}

In this theorem, the persistent part $\Pt(\tau )$ is always present as
$\tau $ is unital. The transient part $\mcal{N}(\tau )$ is absent if and
only if $\tau $ is an automorphism. In other words if the spectrum
of $\tau $ is contained in the unit circle then $\tau $ is an
automorphism. This combined with Remark
\ref{ExtendToFinitedimensioanlC*-algebra} settles Conjecture 5.5 of
  \cite{Francesco2} for
finite dimensional $C^*$-algebras.

 As mentioned before, it follows
from \cite[Theorem 2.12]{BhatTalwarKar} that the persistent part
$\Pt(\tau)$  has a $C\sp \ast$-algebraic structure $(\Pt(\tau), \circ)$
and $\tau$ restricted to this subspace becomes an automorphism. In
addition, if the algebraic structure on $\Pt(\tau)$ is coming from
matrix multiplication, then from \Cref{MainTheorem}, $\tau$ becomes
peripherally automorphic. One might ask whether there exists a
unitary $U \in \M_d$ such that $\tau_{_{\Pt(\tau)}}(X) = U X U^\ast$
for every $X \in \Pt(\tau)$. This is not true in general as
established by the following example.
\begin{example}\label{RestrictionIsNotComingFromAUnitary}
    Let $\tau:\mathbb{M}_3\to \mathbb{M}_3$ be the UCP map given by $$\tau\left(\begin{bmatrix}
        x_{11} &x_{12} &x_{13}\\
        x_{21} &x_{22} &x_{23}\\
        x_{31} &x_{32} &x_{33}
    \end{bmatrix}\right) = \begin{bmatrix}
        x_{33} &0 &0\\
        0 &x_{33} &0\\
        0 &0 &\frac{x_{11}+x_{22}}{2}
    \end{bmatrix}.$$
    Consider a faithful state $\phi$ given by $\phi((x_{ij}))= \frac{x_{11}}{4}+\frac{x_{22}}{4}+\frac{x_{33}}{2}$ which satisfies $\phi (\tau (X))=\phi(X)$ for every $X \in \mathbb{M}_3$.
    This shows that $\tau$ is stationary.
    The set of peripheral eigenvalues of $\tau$ is  $\{ 1,-1\}$ and $$\Pt(\tau)=\left\{\begin{bmatrix}
        a &0 &0\\
        0 &a &0\\
        0 &0 &b
    \end{bmatrix}:a,b\in \mathbb{C} \right\}.$$
It is not difficult to see that $\tau$ does not preserve trace of elements of $\Pt(\tau)$.
So the restriction of $\tau$ to $\Pt(\tau)$ can not be same as conjugation by any unitary in $\M_d$.
\end{example}
\begin{remark}
In \cite{FV} authors deal with  decoherence problem using projection
maps. Let $$Q_\tau:=\frac{1}{2\pi i}\int_{\gamma}
(\mathcal{I}-\tau)^{-1}d\lambda,$$ where  $\gamma$ is any Jordan
curve (i.e. a smooth and closed curve on a simply connected domain)
in the open unit disk containing all the non-peripheral eigenvalues
of $\tau$ in its bounded component. Let
$P_\tau:=\mathcal{I}-Q_\tau$. Therefore
$\mathbb{M}_d=P\tau\mathbb{M}_d\oplus Q_\tau \mathbb{M}_d$. Here
$P_\tau$ is an idempotent UCP map on $\mathbb{M}_d$ and hence
$(P_\tau\mathbb{M}_d,\circ)$ has $C\sp \ast$-algebraic  structure
under Choi-Effros product $\circ$. Not surprisingly
$P_\tau\mathbb{M}_d$ is same as $\Pt(\tau).$
    \end{remark}

\begin{theorem}
Let  $\tau :\M _d\to \M _d$ be a stationary UCP map with faithful
invariant state $\phi .$ Then $\M _d$ is a Hilbert space with inner
product $\langle X, Y\rangle :=\phi (X^*Y), X,Y\in \M _d$ and with
respect to this inner product the decomposition in Theorem
\ref{orthogonal} is an orthogonal decomposition.
\end{theorem}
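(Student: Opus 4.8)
The plan is to turn everything into a statement about the single Hilbert space $(\M_d,\langle\cdot,\cdot\rangle)$ and the contraction $\tau$ acting on it. First I would record that $\langle X,Y\rangle:=\phi(X^*Y)$ genuinely is an inner product: sesquilinearity and $\langle Y,X\rangle=\overline{\langle X,Y\rangle}$ are immediate from $\phi$ being a state, and positive-definiteness is exactly the faithfulness of $\phi$, since $\phi(X^*X)=0$ forces $X^*X=0$ and hence $X=0$. Write $\|\cdot\|_\phi$ for the resulting norm. The whole content of the theorem is then that $\Pt(\tau)\perp\mcal{N}(\tau)$ for this inner product, the rest following from the vector-space decomposition of \Cref{orthogonal}.

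Next I would isolate three facts about the discrete dynamics $\{\tau^n\}$ measured in $\|\cdot\|_\phi$. (i) $\tau$ is a $\|\cdot\|_\phi$-contraction: by the Kadison--Schwarz inequality $\tau(X)^*\tau(X)\le\tau(X^*X)$, so using positivity and $\tau$-invariance of $\phi$, $\|\tau X\|_\phi^2=\phi(\tau(X)^*\tau(X))\le\phi(\tau(X^*X))=\phi(X^*X)=\|X\|_\phi^2$; hence $n\mapsto\|\tau^nZ\|_\phi$ is non-increasing for every $Z\in\M_d$. (ii) $\tau$ is $\|\cdot\|_\phi$-isometric on $\Pt(\tau)$: since $\tau$ is stationary, \Cref{SIsInR} makes it peripherally automorphic, so by \Cref{Ourclass} and \Cref{MainTheorem} $\Pt(\tau)$ is a genuine $*$-subalgebra of $\M_d$ contained in the multiplicative domain $\mcal{M}_\tau$, it is $\tau$-invariant, and $\tau|_{\Pt(\tau)}$ is a $*$-automorphism; thus for $X\in\Pt(\tau)$ one has $\|\tau X\|_\phi^2=\phi(\tau(X^*)\tau(X))=\phi(\tau(X^*X))=\phi(X^*X)=\|X\|_\phi^2$, and by $\tau$-invariance of $\Pt(\tau)$ this gives $\|\tau^nX\|_\phi=\|X\|_\phi$ for all $n$. (iii) For $Y\in\mcal{N}(\tau)$ we have $\tau^nY\to0$ in $\M_d$, hence $\|\tau^nY\|_\phi\to0$.

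Finally I would run the standard convexity argument. Fix $X\in\Pt(\tau)$ and $Y\in\mcal{N}(\tau)$, and for $t>0$ put $Z_t:=X+tY$. Using (ii), (iii) and the Cauchy--Schwarz bound $|\langle\tau^nX,\tau^nY\rangle|\le\|X\|_\phi\,\|\tau^nY\|_\phi$,
\[
\|\tau^nZ_t\|_\phi^2=\|\tau^nX\|_\phi^2+2t\,\mathrm{Re}\langle\tau^nX,\tau^nY\rangle+t^2\|\tau^nY\|_\phi^2\ \longrightarrow\ \|X\|_\phi^2
\]
as $n\to\infty$, while (i) gives $\|\tau^nZ_t\|_\phi^2\le\|Z_t\|_\phi^2=\|X\|_\phi^2+2t\,\mathrm{Re}\langle X,Y\rangle+t^2\|Y\|_\phi^2$. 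Letting $n\to\infty$ and then $t\to0^+$ yields $\mathrm{Re}\langle X,Y\rangle\ge0$; since $-Y,\,iY,\,-iY$ also lie in $\mcal{N}(\tau)$, applying the same bound to each of them forces $\mathrm{Re}\langle X,Y\rangle=\mathrm{Im}\langle X,Y\rangle=0$, i.e. $\langle X,Y\rangle=0$. Hence $\Pt(\tau)\perp\mcal{N}(\tau)$, and with \Cref{orthogonal} this makes the direct-sum decomposition orthogonal. (Structurally this just says $\Pt(\tau)$ and $\mcal{N}(\tau)$ are the unitary and completely-non-unitary parts in the canonical decomposition of the Hilbert-space contraction $\tau$, which are automatically orthogonal.)

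The one genuinely delicate point is fact (ii): that $\tau$ preserves $\|\cdot\|_\phi$ on all of $\Pt(\tau)$, not merely on individual peripheral eigenvectors (where it is trivial). This is exactly where stationarity is used essentially — via \Cref{SIsInR} it upgrades $\tau|_{\Pt(\tau)}$ to an honest $*$-automorphism of a $C^*$-subalgebra preserving the faithful state $\phi|_{\Pt(\tau)}$, and such a map is automatically a $\|\cdot\|_\phi$-isometry. Everything else is routine.
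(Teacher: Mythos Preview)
Your proof is correct, but it takes a different and more circuitous route than the paper's. The paper exploits the multiplicative-domain inclusion $\Pt(\tau)\subseteq\mathcal{M}_\tau$ in a stronger way: for $X\in\Pt(\tau)$ and \emph{arbitrary} $Y\in\M_d$ one has $\tau(X^*Y)=\tau(X)^*\tau(Y)$, so directly
\[
\langle X,Y\rangle=\phi(X^*Y)=\phi(\tau(X^*Y))=\phi(\tau(X)^*\tau(Y))=\langle\tau(X),\tau(Y)\rangle,
\]
and iterating (using $\tau^n X\in\Pt(\tau)\subseteq\mathcal{M}_\tau$) gives $\langle X,Y\rangle=\langle\tau^nX,\tau^nY\rangle$ for all $n$. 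For $Y\in\mcal{N}(\tau)$ the right-hand side tends to $0$ since $\tau^nY\to0$ and $\tau^nX$ stays bounded, hence $\langle X,Y\rangle=0$ in one line. You instead use only the \emph{diagonal} consequence of $\Pt(\tau)\subseteq\mathcal{M}_\tau$ (the $\|\cdot\|_\phi$-isometry of $\tau$ on $\Pt(\tau)$) together with global contractivity, and then recover orthogonality through a variational $t\to0^+$ argument. That argument is essentially the Sz.-Nagy unitary/c.n.u.\ decomposition for Hilbert-space contractions, so it is more portable; the paper's argument is shorter and avoids the point you flagged as ``delicate,'' since the mixed identity $\langle X,Y\rangle=\langle\tau X,\tau Y\rangle$ gives orthogonality without ever needing $\tau|_{\Pt(\tau)}$ to be a $\|\cdot\|_\phi$-isometry as a separate step.
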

\begin{proof}
As $\tau $ is a stationary UCP maps, from Corollary 2.8 and part (1)
 of Theorem 2.6, $\Pt(\tau )$ is in the multiplicative domain $M_\tau
$ of $\tau $.  Therefore for any $X, Y \in \M_d$, with $X\in \Pt(\tau
)$, we have
$$\langle X, Y \rangle =  \phi(X^\ast Y) =  \phi(\tau(X^\ast Y)) = \phi(\tau(X)^\ast \tau(Y))  = \langle \tau(X), \tau(Y) \rangle
.$$

Let $Y \in \mcal{N}(\tau ).$ Then,   $\lim _{n\to \infty}\tau^n(Y) = 0$.
However, $\langle X, Y \rangle = \langle \tau(X), \tau(Y) \rangle =
\langle \tau^2(X), \tau^2(Y) \rangle = \langle \tau^n(X), \tau^n(Y)
\rangle$ for every $n \in \N$. Now the fact that $\tau$ is
contractive  yields that $\langle X, Y \rangle = 0.$
\end{proof}

\begin{remark}\label{ExtendToFinitedimensioanlC*-algebra}
Suppose $\mathcal{A}$ is a finite dimensional unital $C^*$-algebra
and let $\tau : \mathcal{A}\to \mathcal{A}$ be a UCP map. Without
loss of generality, we may take $\mathcal{A}= \oplus _{j=1}^k\M
_{d_j}$ for some $d_1, \ldots , d_k, k\in \mathbb{N}.$ Let
$\mathcal{H}= \oplus _{j=1}^k(\mathbb{C}^{d_j})$ and let $P_j$ be
the projection of $\mathcal{H}$ to $\mathbb{C}^{d_j}$. Let
$\mathcal{B}$ be the $C^*$-algebra $\mathscr{B}(\mathcal{H})= \M
_{d}$ where $d= \sum _{j=1}^kd_j.$ Define $\tilde{\tau}:\M _d\to \M
_d$ by $\tilde{\tau}(X) = \tau ( \sum _{j=1}^dP_jXP_j)$. Then the
peripheral space of $\tilde{\tau }$ is identical to that of $\tau $.
This way, all the analysis done here can be extended to UCP maps on
general finite dimensional $C^*$-algebras.
\end{remark}

A UCP map $\tau : \M _d\to \M _d$ is said to be faithful if $\tau(X^\ast X)=0$ implies $X=0$. The following example shows that a faithful
UCP map need not be peripherally automorphic.

\begin{example}
Let $$\tau\left(\begin{bmatrix}
    x_{11} &x_{12} &x_{13}\\
    x_{21} &x_{22} &x_{23}\\
    x_{31} &x_{32} &x_{33}
\end{bmatrix}\right) = \begin{bmatrix}
    x_{11} &0 &0\\
    0 &x_{22} &0\\
    0 &0 & \frac{x_{11} + x_{22}+ x_{33}}{3}
\end{bmatrix}.$$
Then $\tau$ is a faithful UCP map on $\M_3$ such that $$\tau\left(\begin{bmatrix}
    0 & 0 & 0\\
    0& 2 & 0\\
    0 & 0 & 1
\end{bmatrix}\right)= \begin{bmatrix}
0 & 0 & 0\\
0& 2 & 0\\
0 & 0 & 1
\end{bmatrix}$$ but $$\tau\left(\begin{bmatrix}
0 & 0 & 0\\
0& 2 & 0\\
0 & 0 & 1
\end{bmatrix}^2\right) = \tau\left(\begin{bmatrix}
0 & 0 & 0\\
0& 4 & 0\\
0 & 0 & 1
\end{bmatrix}\right) = \begin{bmatrix}
0 & 0 & 0\\
0& 4 & 0\\
0 & 0 &
\frac{5}{3}
\end{bmatrix} \neq \begin{bmatrix}
0 & 0 & 0\\
0& 4 & 0\\
0 & 0 & 1
\end{bmatrix}.$$
Thus, $\tau$ is not peripherally automorphic.
\end{example}

\section*{Acknowledgments}
Bhat gratefully acknowledges funding from  SERB(India) through JC
Bose Fellowship No. JBR/2021/000024. Kar is thankful to NBHM (India) for funding.
Talwar thanks ISI Bangalore for financial support through Research Associate scheme.
A part of this work was completed during his research assistantship at Nazarbayev University and he appreciates their support.

\end{document}